\newtheorem{theorem}{Theorem}[section]
\newtheorem{lemma}[theorem]{Lemma}
\newtheorem{proposition}[theorem]{Proposition}
\newtheorem{corollary}[theorem]{Corollary}
\theoremstyle{definition}
\theoremstyle{remark}
\newtheorem{remark}[theorem]{Remark}
\numberwithin{equation}{section}
\def\Spec{\mathop{\mathrm{Spec}}}
\def\O{\mathscr{O}}
\def\H{\mathcal{H}}
\def\WO{{W\Omega_X^N}}
\def\WOn{{W_n\Omega_X^N}}
\def\fa{{\text{~for any~}}}
\def\tensor{\mathop{\otimes}\limits}
\def\sHom{\mathop{\mathcal{H}\! \mathit{om}}\nolimits}
\def\Hom{\mathop{\mathrm{Hom}}\nolimits}
\newcommand{\proofstep}[1]{%
  \par
  \addvspace{\medskipamount}
  \textit{#1\@addpunct{:}}\enspace\ignorespaces
}
\begin{document}

\title{
    Vanishing and a Counterexample for Witt Divisorial Sheaves
}


\author{Niklas Lemcke}
\subjclass[2010]{14F30, 14F17, 14J26}
\keywords{Witt sheaves, Kodaira vanishing theorem, Kawamata-Viehweg vanishing theorem, positive characteristic}

\address{Department of Mathematics, School of Science and Engineering, Waseda University,
Ohkubo 3-4-1, Shinjuku, Tokyo 169-8555, Japan}
\email{numberjedi@akane.waseda.jp}

\begin{abstract}
    First we refine the duality theory for Witt divisorial sheaves on smooth projective varieties over a perfect field of positive characteristic.
    Building on previous work \cite{Lemcke}, we remove the residual derived limit to obtain a cleaner isomorphism.
    As an application, we prove a Ramanujam--type vanishing theorem for Witt divisorial sheaves of nef and big divisors on surfaces.
    Finally, we show that a surface constructed by Langer \cite{Langer} with a divisor constructed by Cascini and Tanaka \cite{CT} gives a counterexample to Kawamata--Viehweg--type vanishing of Witt divisorial sheaves in dimension two.
\end{abstract}

\maketitle

\tableofcontents

\section*{Introduction}
    It is well--known that Kodaira Vanishing and its generalizations do not hold in positive characteristic. 
    One possible approach to overcome this is to use Witt vectors to move from a positive characteristic setting to a characteristic zero setting, losely speaking. This can be attempted in at least two different ways.

    Given an effective Cartier disivor $D$ on a projective variety $X$, divisorial sheaves $\O_X(-D)$ can be thought of in two equivalent ways:
    \begin{itemize}
        \item as the ideal sheaf $\mathscr I_D$ of $\O_X$,
        \item as the invertible $\O_X$--module locally generated by the local equations of $D$.
    \end{itemize}
    In the case of Witt vectors, however, these two characterizations are no longer equivalent.

    Recent work exploring the first option includes papers by Ren and Ruelling~\cite{RR} as well as Baudin~\cite{Baudin}.
    This approach has the advantage that the endomorphisms $V$ and $F$ naturally extend to the Witt sheaf.
    In \cite{Lemcke} we had to consider the infinite direct sum $\bigoplus_\mathbb Z^t W\O(p^tD)$ to accomplish this for the second approach.

    This paper continues to discuss the second approach, based on previous work by Tanaka~\cite{Tanaka} and the author~\cite{Lemcke}.
    This approach has the advantage of being more easily generalized beyond effective divisors.
    Tanaka proposed a Kodaira--like vanishing theorem using the second approach, which holds for ample divisors in positive characteristic.

    \begin{theorem}[\cite{Tanaka}*{Corollary 4.16, Theorem 4.17, Theorem 5.3 Step 5}]\label{thmTanaka}
        Let $k$ be a perfect field of positive characteristic $p$, and $X \xrightarrow \phi \text{Spec }k$ be an $N$--dimensional smooth projective variety.
        If $A$ is an ample $\mathbb Q$--divisor with simple normal crossing support on $X$, then
        \renewcommand{\theenumi}{\roman{enumi}}
        \begin{enumerate}
            \item 
                \begin{itemize}
                    \item $H^j(X, W\O_X(-A))$ is $p^t\text{--torsion}, \text{ for some } t, \fa j < N$.
                    \item $H^j(X, W\O_X(-sA))$ is torsion free for large enough $s$, all $j$.
                \end{itemize}
            \item 
                \begin{itemize}
                    \item $R^i\phi_*\sHom_{W\O_X}(W\O_X(-A),\WO) = 0$ for $0<i$ if $A$ is a $\mathbb Z$--divisor.
                    \item the above term is at most torsion if $A$ is a $\mathbb Q$--divisor.
                \end{itemize}
        \end{enumerate}
    \end{theorem}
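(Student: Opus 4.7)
The plan is to reduce the three statements to Serre vanishing by exploiting the canonical $V$-filtration on $W_n\O_X$, whose graded pieces are Frobenius pushforwards of ordinary coherent sheaves on $X$. Concretely, I would start from the short exact sequence
\begin{equation*}
0 \to \O_X \xrightarrow{V^n} W_{n+1}\O_X \to W_n\O_X \to 0
\end{equation*}
and its divisorial analogue for $W_n\O_X(-A)$, in which the kernel becomes a Frobenius pushforward of $\O_X(-p^n A)$ (the SNC assumption on the support is what makes this work cleanly in the $\mathbb Q$--Cartier case, via a chosen rounding of a common denominator). Since $A$ is ample, Serre duality combined with Serre vanishing ensures that $H^j(X,\O_X(-p^n A))$ is a finite dimensional $k$-vector space that vanishes for $j<N$ as soon as $p^n$ is sufficiently large, so only finitely many indices $n$ contribute nontrivially to the graded pieces in each cohomological degree $j<N$.

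Taking the long exact sequence in cohomology and inducting on $n$, one controls $H^j(X,W_n\O_X(-A))$ for $j<N$: past a threshold level $n_0$ beyond which the kernel no longer contributes to the relevant degrees, the transition maps $H^j(X,W_{n+1}\O_X(-A))\to H^j(X,W_n\O_X(-A))$ become isomorphisms, so the inverse system stabilizes to a finite $p$-power torsion group whose exponent $t$ is bounded by the torsion accumulated during the first $n_0$ steps. Mittag--Leffler then applies, giving the first bullet of (i). For the second bullet, once $s$ is sufficiently large the ampleness of $sA$ forces $H^j(X,\O_X(-sp^nA))=0$ for all $j<N$ and all $n$, so the tower already vanishes at finite level in those degrees; in the top degree $j=N$, torsion-freeness comes from Ekedahl--Serre duality by identifying the dual with a global-section group of a torsion-free sheaf of the form $\WO(sA)$.

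For (ii), the plan is to invoke Ekedahl's duality for the de Rham--Witt complex in the form
\begin{equation*}
R\phi_*R\sHom_{W\O_X}\!\bigl(W\O_X(-A),\WO\bigr)[N]\cong R\Hom_{W(k)}\!\bigl(R\phi_*W\O_X(-A),W(k)\bigr),
\end{equation*}
and to translate the conclusions of (i) across it. When $A$ is $\mathbb Z$--Cartier, $W\O_X(-A)$ is invertible over $W\O_X$, so the left-hand side reduces up to shift to $R\phi_*\WO(A)$; vanishing in (i) for large multiples of $A$ together with the standard dictionary exchanging $p$-torsion with $p$-cotorsion then forces $R^i\phi_*\sHom$ to vanish in the claimed range. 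When $A$ is only $\mathbb Q$--Cartier, invertibility fails and only the torsion bound transfers, yielding the weaker ``at most torsion'' conclusion.

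The hardest step is establishing the uniform torsion exponent $t$, especially in the $\mathbb Q$--Cartier case: one must fix a common integer denominator clearing $A$, arrange the divisorial Witt sheaves compatibly with this choice, and verify that the accumulated torsion through the induction and the limit stays bounded. This is precisely the weak point that the present paper is designed to sharpen, with the goal of replacing the residual torsion in (ii) by honest vanishing.
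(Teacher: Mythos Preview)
First, note that the paper does not supply a proof of this theorem: it is quoted verbatim from Tanaka with the citation \cite{Tanaka}*{Corollary 4.16, Theorem 4.17, Theorem 5.3 Step 5}, and no proof environment follows. So there is no in--paper argument to compare against; what one can do is check your sketch against what Tanaka actually does and against the framework the present paper builds.

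Your outline for part (i) is essentially Tanaka's argument: filter $W_n\O_X(-A)$ by the $V$--filtration, identify the graded pieces with Frobenius pushforwards of $\O_X(\lfloor -p^mA\rfloor)$ (this is where the SNC hypothesis enters), apply Serre vanishing for large $m$, and pass to the limit via Mittag--Leffler. One small correction: in your displayed short exact sequence the kernel should be $F_*^n\O_X$, not $\O_X$, though you acknowledge this in words. The torsion--freeness of $H^N$ for large $s$ is indeed the more delicate point and is extracted by Tanaka from the duality machinery rather than from the filtration alone.

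Your outline for part (ii), however, contains a genuine gap. The displayed isomorphism
\[
R\phi_*R\sHom_{W\O_X}\!\bigl(W\O_X(-A),\WO\bigr)[N]\cong R\Hom_{W(k)}\!\bigl(R\phi_*W\O_X(-A),W(k)\bigr)
\]
is not Ekedahl's duality and is not known to hold in this $W(k)$--linear form. Ekedahl's result is a statement about the full de Rham--Witt complex as a module over the Raynaud ring; isolating the degree--zero piece $W\O_X$ destroys the structure needed to make the dualizing object behave. The correct replacement, worked out in \cite{Lemcke} and refined in the present paper (Theorem~\ref{thm}), requires passing to the Cartier--Dieudonn\'e ring $\omega=W_\sigma[F,V]$, replacing $W\O_X(-A)$ by $\omega(-A)=\bigoplus_{t\in\mathbb Z}W\O_X(-p^tA)$, and replacing $W(k)$ by the left $\omega$--module $\check\omega$. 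In other words, the very duality you invoke is the main theorem of this paper, not an input to it. Moreover, as the paper notes in Remark~\ref{rmk}, Tanaka's own proof of (ii) does not proceed by dualizing (i); it is a direct Serre--vanishing argument applied to the filtration on $W_n\Omega_X^N$ (and, in the $\mathbb Q$--Cartier case, a covering trick as in \cite{Tanaka}*{Prop.~4.14}). Deducing (ii) from (i) via duality is precisely the strategy of \cite{Lemcke} and the present paper, and carrying it out is what allows the torsion in (ii) to be eliminated in Corollary~\ref{cor}.
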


    In \cite{Lemcke}*{Theorem 3.10} we established the isomorphism 
    \[
        R\phi_*R\lim_n \sHom_{W\O_X}(\omega(D), \WOn)
        \cong
        R\Hom_\omega(R\phi_*\omega(D), \check\omega[-N]),
    \]
    which allowed us to recover (ii) of Theorem \ref{thmTanaka}
    from the easier to prove (i) (cf. \cite{Lemcke}*{Corollary 3.12}).

    The remaining derived limit functor on the left hand side was unsatisfactory. 
    In this paper we show that the higher derived limits indeed vanish for any $\mathbb Q$--divisors with SNC support (cf. Proposition \ref{prop1}), which leads to the improved statement of the first theorem:
    \begin{theorem}[cf. Theorem \ref{thm}]\label{thmIntro}
        Let $k$ be a perfect field of characteristic $0<p$, and $X \xrightarrow \phi \text{Spec }k$ be an $N$--dimensional smooth projective variety.
        If $D$ is a $\mathbb Q$--divisor with simple normal crossing support, then
        \[
            R\phi_*\sHom_{W\O_X}(\omega(D), \WO)
            \cong R\Hom_\omega(R\phi_*\omega(D), \check\omega[-N]).
        \]
        where we write $\omega$ for the Cartier--Dieudonn\'e ring $W_\sigma[F,V]$, $\check\omega$ for a certain left--$\omega$--module
        and $\omega(D) := \bigoplus_{t \in \mathbb Z} W\O_X(p^tD)$.
    \end{theorem}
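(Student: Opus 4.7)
The plan is to reduce the theorem to combining the isomorphism of \cite{Lemcke}*{Theorem 3.10} with the vanishing of higher derived limits asserted in Proposition \ref{prop1}. The earlier result already supplies
\[
    R\phi_*R\lim_n \sHom_{W\O_X}(\omega(D), \WOn)
    \cong
    R\Hom_\omega(R\phi_*\omega(D), \check\omega[-N]),
\]
so the remaining task is to identify the left-hand side with $R\phi_*\sHom_{W\O_X}(\omega(D), \WO)$. Once this identification is in hand, the theorem follows by transport along the isomorphism above.

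For the identification, I would first observe that, because $\WO = \lim_n \WOn$, applying $\sHom_{W\O_X}(\omega(D), -)$ term-by-term gives a natural isomorphism
\[
    \sHom_{W\O_X}(\omega(D), \WO) \xrightarrow{\sim} \lim_n \sHom_{W\O_X}(\omega(D), \WOn),
\]
which is purely formal from the universal property of the inverse limit in the second argument. Granting Proposition \ref{prop1}, the ordinary limit agrees with the derived limit $R\lim_n$ of the inverse system of sheaves, and since $R\phi_*$ commutes with these manipulations after applying $R\lim$, the two sides of the desired isomorphism coincide.

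The real content therefore lies in Proposition \ref{prop1}, and that is where I expect the main obstacle. To attack it, I would use the decomposition $\omega(D) = \bigoplus_t W\O_X(p^tD)$, which turns $\sHom_{W\O_X}(\omega(D), \WOn)$ into the product $\prod_t \sHom_{W\O_X}(W\O_X(p^tD), \WOn)$, and reduces the vanishing of $R^i\lim_n$ for $i \ge 1$ to a Mittag--Leffler (or, better, surjectivity) check on each factor. Under Witt--divisorial duality, each factor should identify with a twisted Witt differential sheaf of the form $\WOn(-p^tD)$ (up to sign conventions and round--ups), so the transition maps reduce to the standard restriction $\WOno \twoheadrightarrow \WOn$ after twist. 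The delicate point is that $D$ is only $\mathbb Q$--Cartier, so one has to argue locally in SNC coordinates that the rounding inherent in the definition of Witt--divisorial sheaves is compatible with the transition maps and that surjectivity survives this twist uniformly in $t$. This is where I would expect to spend the bulk of the technical effort, and it is precisely the step that was not available when \cite{Lemcke}*{Theorem 3.10} was proved.
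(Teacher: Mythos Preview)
Your reduction to Proposition~\ref{prop1} via \cite{Lemcke}*{Theorem~3.10} is exactly the paper's argument; the divergence is in how you propose to establish Proposition~\ref{prop1} itself.

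You want to identify $\sHom_{W\O_X}(W\O_X(p^tD),\WOn)$ with a twist $\WOn(-p^tD)$ and then check surjectivity of the restriction map directly in SNC coordinates, for every $t$. The paper does not attempt this. Its Lemma~\ref{lem1} uses a commutative diagram built from the maps $F^*$ and $V^*$ (exploiting $FV=VF=p$) to show that Mittag--Leffler for the system attached to $D$ follows once surjectivity of $R$ is known for $p^tD$ with $t$ \emph{sufficiently large} only. For such $t$ the divisor $p^tD$ has denominators prime to $p$, and Proposition~\ref{prop1} treats these $\mathbb Z_{(p)}$--divisors by pulling back along a finite surjective cover $h\colon X''\to X$ from \cite{Tanaka}*{Prop.~4.14} on which $h^*(p^tD)$ becomes integral (so $W_n\O_{X''}(h^*p^tD)$ is locally free and surjectivity is immediate); a split--surjection argument then descends surjectivity to $X$. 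In particular the paper never computes the $\sHom$--sheaf for a genuinely non--integral divisor.

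Your direct approach is not visibly wrong, but the identification of the $\sHom$--sheaf with a twisted de Rham--Witt sheaf is not formal when $p^tD$ is non--integral: this is precisely where $W_n\O_X(p^tD)$ fails to be locally free and round--up/round--down discrepancies enter, and you supply no concrete mechanism for why surjectivity survives them. The paper's finite--cover trick combined with the $F$/$V$ diagram sidesteps this difficulty entirely, at the cost of importing \cite{Tanaka}*{Prop.~4.14} and the extra bookkeeping of Lemma~\ref{lem1}.
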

    Note that we consider $\omega(D)$ instead of $W\O_X(D)$ since, unlike $W\Omega_X^\bullet$ in \cite{Ekedahl}, the latter does not allow for an appropriate $\omega$--module structure.
    We then Apply Theorem \ref{thm} to show that for ample $\mathbb Q$--divisors, the torsion in Theorem \ref{thmTanaka}(ii) is in fact zero.
    \begin{corollary}[cf. Corollary \ref{cor}]\label{corIntro}
        Let $X$ be as in Theorem \ref{thmIntro}.
        Suppose that $R^j\Gamma_X(W\O_X(p^tD))$ is $V$--torsion free for $j\leq N$ and large enough $t$.
        Then
        \[
            R\Hom_\omega(R\phi_*\omega(D), \check\omega[-N])
            \cong 
            \Hom_\omega(R\phi_*\omega(D), \check\omega[-N]).
        \]
        If furthermore $R^j\phi_*\omega(D)\otimes_\mathbb Z\mathbb Q = 0$ for $j<N$, then 
        \[
            R^i\phi_*(\sHom_{W\O_X}(W\O_X(D), \WO)) = 0
        \]
        for any $0<i$.
        In particular, by Theorem \ref{thmTanaka}(i) this is the case when $-D$ is ample.
    \end{corollary}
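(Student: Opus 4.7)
The strategy is to apply Theorem~\ref{thm} to reduce the corollary to a statement about the $\omega$-module $R\phi_*\omega(D)$, then to use the $V$-torsion-free hypothesis to collapse the derived $\Hom$ to the underived one, and finally to use the additional torsion hypothesis to concentrate the complex in a single cohomological degree. The starting step is Theorem~\ref{thm}, which identifies $R\phi_*\sHom_{W\O_X}(\omega(D),\WO)$ with $R\Hom_\omega(R\phi_*\omega(D),\check\omega[-N])$, reducing everything to a purely algebraic computation of $\omega$-module Ext.

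For the first isomorphism of the corollary, write $R^j\phi_*\omega(D)=\bigoplus_t H^j(X,W\O_X(p^tD))$ and observe that $F,V\in\omega$ shift this grading in $t$, coupling the summands. The hypothesis that each summand is $V$-torsion-free for $j\leq N$ and $t\gg 0$ should suffice, via the $\omega$-action, to give the desired $V$-acyclicity of the whole complex. I would then invoke a structural property of $\check\omega$, traceable to its construction in \cite{Lemcke}, asserting that $\check\omega$ is ``injective enough'' along the $V$-torsion-free locus, so that $\Ext^{>0}_\omega(R\phi_*\omega(D),\check\omega)=0$ and the derived Hom collapses to the naive one.

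For the second assertion, the further hypothesis $R^j\phi_*\omega(D)\otimes_\mathbb Z\mathbb Q=0$ for $j<N$ makes these cohomologies simultaneously $V$-torsion-free and $p$-power torsion. Exploiting the grading on $\omega(D)$ together with the relation $p=FV$, I would argue that $R^j\phi_*\omega(D)=0$ for $j<N$, so that $R\phi_*\omega(D)$ is concentrated in degree $N$ and $\Hom_\omega(R\phi_*\omega(D),\check\omega[-N])$ lies only in cohomological degree~$0$. This gives $R^i\phi_*\sHom_{W\O_X}(\omega(D),\WO)=0$ for $i>0$. To specialize from $\omega(D)$ to $W\O_X(D)$, note that as $W\O_X$-modules $\omega(D)=\bigoplus_t W\O_X(p^tD)$, so $\sHom_{W\O_X}(\omega(D),\WO)=\prod_t\sHom_{W\O_X}(W\O_X(p^tD),\WO)$; vanishing of $R^i\phi_*$ for the product forces it for each factor, including $t=0$. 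For the final claim, when $-D$ is ample one applies Theorem~\ref{thmTanaka}(i) with $A=-p^tD$ (still ample) for each $t$: the first bullet gives $p$-power torsion of $H^j(X,W\O_X(p^tD))$ for $j<N$, hence the $\otimes\mathbb Q$-vanishing, and the second bullet gives the torsion-freeness required for $s=p^t$ large.

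The main obstacle will be making precise the ``$V$-torsion-free plus $p$-power torsion implies zero'' step for the graded $\omega$-module $R\phi_*\omega(D)$, since the ring identity $p=FV$ alone is insufficient; one must carefully exploit the direct-sum grading and how $F,V$ interweave its summands. The $\Ext$-vanishing of $\check\omega$ against suitable $V$-torsion-free modules in Step~2 likewise needs to be extracted directly from the explicit construction of $\check\omega$ in \cite{Lemcke} rather than obtained by a formal argument.
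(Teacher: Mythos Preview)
Your overall plan—invoke Theorem~\ref{thm} and reduce to an $\omega$-module computation—matches the paper, but the execution diverges at both steps.

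For the first isomorphism, the paper does not appeal to an abstract injectivity property of $\check\omega$. Instead it rewrites, via \cite{Lemcke},
\[
R\Hom_\omega(R\phi_*\omega(D),\check\omega[-N])\;\cong\;\lim_n\Hom_{W_n}\bigl(\omega_n\tensor_\omega^L R\Gamma_X(\omega(D)),\,W_n[-N]\bigr)
\]
and runs the Tor spectral sequence for $\omega_n\tensor_\omega^L(-)$. The key identification is that the $E_2^{-1,j+1}$-term is precisely the $V^n$-torsion of $R^{j+1}\Gamma_X(\omega(D))$; the hypothesis that the graded pieces are $V$-torsion-free for large $t$ then forces the $W_n$-dual of this Tor term to vanish after passing to $\lim_n$, because all but finitely many terms of each inverse system are zero. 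Your ``$\check\omega$ is injective enough'' does not isolate this mechanism, and it is not clear how a bare injectivity statement for $\check\omega$ would exploit a hypothesis that holds only for large $t$.

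For the second assertion there is a genuine gap. You attempt to show $R^j\phi_*\omega(D)=0$ for $j<N$, and you correctly flag that $p=FV$ alone will not do it: one would also need $F$ to be injective, which is nowhere assumed. The paper makes no such claim. It instead uses that $\Hom_\omega(M,\check\omega)=0$ whenever $M$ is $\mathbb Z$-torsion, a property of $\check\omega$ itself; applied to $M=R^{N-i}\Gamma_X(\omega(D))$ for $0<i$, this gives the desired vanishing without the source being zero. Remark~\ref{rmk} makes the distinction explicit: the corollary does \emph{not} require $H^j(X,W\O_X(p^tD))$ to vanish for any $t$, only to be torsion and eventually $V$-torsion-free, and nonzero modules satisfying both conditions exist (e.g.\ $k[[V]]$).
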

    Corollary \ref{corIntro} makes a notable distinction between $V$--torsion and $p$--torsion. 
    The proof of Theorem \ref{thmTanaka} relies on Serre vanishing, which for ample divisors $A$ and sufficiently large $t$ guarantees the vanishing of $H^{j<N}(X, W\O_X(-p^tA))$.
    Corollary \ref{corIntro} shows that, in order for the vanishing theorem of Theorem \ref{thmTanaka}(ii) to hold for a SNC $\mathbb Q$--divisor $D$, such vanishing for large multiples of $D$ is not necessary (cf. Remark \ref{rmk}).

    We would like to gauge the type of settings in which the vanishing of Witt divisorial sheaves may hold. 
    In order to do so, sections 3 and 4 focus on the simpler special case of surfaces.
    Theorem \ref{thmKVV} proves that vanishing holds for nef and big $\mathbb Q$--divisors on a surface.
    \begin{theorem}[cf. Theorem \ref{thmKVV}]
        For $X$ a smooth surface over a field $k$ of positive characteristic and $D$ a nef and big $\mathbb Q$--divisor with SNC support on $X$, 
        \begin{enumerate}
            \item $H^1(X,W\O_X(-D))_\mathbb Q=0$,
            \item $H^1(X,\sHom_{W\O_X}(W\O_X(-D),\WO)) = 0$.
        \end{enumerate}
    \end{theorem}
    In section \ref{sec4} we show that the counterexample to Kawamata--Viehweg Vanishing of \cite{CT} fails vanishng in the Witt divisoral case too.
    \begin{theorem}[cf. Theorem \ref{example}]
        There exists a klt surface pair $(X,\Delta)$ and an integral divisor $B$ on $X$, with $X$ smooth and $B-\Delta$ nef and big, such that 
        \[
            H^1(X,W\O_X(-B))_\mathbb Q \neq 0.
        \]
    \end{theorem}

    \subsection*{Acknowledgements}
        I would like to thank my advisor Hajime Kaji for his continued support and useful suggestions. 
        I would also like to thank Kay R\"ulling for a fruitful conversation, Shunsuke Takagi for his helpful comments and pointing out an alternative proof of Lemma \ref{lemSurfaceNaBSerre}, as well as Hiroyuki Ito, Noriyuki Suwa and Tomohide Terasoma for their time and useful comments.


\section{Notation}
    Fix the following notations and conventions:
    \begin{itemize}
        \item A variety over $k$ is a separated integral scheme of finite type over $k$.
        \item Throughout this paper we define $X \xrightarrow \phi S=\Spec k$, where $k$ is a perfect field of characteristic $p>0$, and $X$ is assumed to be a smooth projective variety.
        \item If $C$ is a complex, $C[i]$ denotes $C$ shifted by $i$ in complex degree.
        \item If $M_n$ is an inverse system, then $\lim_n M_n$ denotes the inverse limit.
        \item Given a ring $A$, $W(A)$ are the Witt vectors of $A$. If $k$ is the base field, we may write $W$ instead of $W(k)$ for the sake of brevity.
        The truncated Witt vectors are denoted by $W_n(A)$ or $W_n$ respectively, for $n \in \mathbb N$.
        $F$ and $V$ denote the Frobenius and {\it Verschiebungs} maps on $W(A)$, respectively.
        \item The notion of Witt vectors can be sheafified to yield sheaves $W\O_X$, $W_n\O_X$, $W\O_X(D)$ and $W_n\O_X(D)$ 
        for $n \in \mathbb N$ and a $\mathbb Q$--divisor $D$ on $X$.
        The truncated versions of these sheaves are coherent $W_n\O_X$--modules (cf. \cite{Tanaka}*{Prop. 3.8}).
        For more details the reader may refer to \cite{Tanaka}*{Sect. 2, 3} or \cite{Lemcke}*{Def. 2.1, 2.2}.
        The ringed space $(X, W_n\O_X)$ is known to be a noetherian scheme (cf. \cite{Illusie}).
        \item $W_\bullet\Omega_X^\bullet$ denotes the De Rham--Witt complex of $X$ (cf. \cite{Illusie}).
        \item For the sake of brevity we write $\omega$ for the (non--commutative) {\it Cartier--Dieudonn\'e ring} $W_\sigma[F,V]$, 
        that is the $W$--algebra generated by $V$ and $F$, subject to the relations 
        \[
            aV=V\sigma(a), Fa=\sigma(a)F \text{ for any } a \in W; VF=FV=p,
        \]
        where $\sigma$ is the Frobenius map on $W$, induced by that on $k$.
        Its truncation at $n$ is denoted by $\omega_n$.
    \end{itemize}

\section{Duality}
    Let $X$ be a smooth projective variety of dimension $N$ over a perfect field $k$ of positive characteristic $p$.
    Recall from \cite{Lemcke}[Propositions 2.3 and 2.4] that 
    \begin{equation}\label{iso1}
        \sHom_{W_n\O_X}(W_n\O_X(D),\WOn) \cong \sHom_{W\O_X}(W\O_X(D), \WOn)
    \end{equation}
    and
    \begin{equation}\label{iso2}
        \sHom_{W_m\O_X}(W_m\O_X(D),W_m\Omega_X^N) \cong R\sHom_{W_n\O_X}(W_m\O_X(D),\WOn),
    \end{equation}
    for $m\leq n$.
    We will use these two isomorphisms liberally throughout the following.
    \begin{lemma}\label{lem1}
        Let $D$ be a $\mathbb Q$--divisor on $X$. Suppose there exists a basis $\mathscr B$ for the topology of $X$ such that, 
        for some $t_0 \in \mathbb N, t_0 \leq t$ and any $U \in \mathscr B$, the map
        \[
            \Gamma_U(\sHom(W\O_X(p^tD),W_{n+1}\Omega_X^N)) \rightarrow \Gamma_U(\sHom(W\O_X(p^tD),W_{n}\Omega_X^N))
        \]
        induced by $W_{n+1}\Omega_X^N \xrightarrow R \WOn$ is surjective.
        
        Then 
        \[
            \sHom_{W\O_X}(W\O_X(D),\WO) \cong R\lim_nR\sHom_{W_n\O_X}(W_n\O_X(D),\WOn).
        \]
    \end{lemma}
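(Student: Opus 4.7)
The plan is to reduce the derived limit on the right-hand side to an ordinary inverse limit that can be matched directly against the left-hand side, with the hypothesis entering as a local Mittag--Leffler condition that controls the higher derived limits.

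First, I would rewrite the right-hand side: setting $m=n$ in (\ref{iso2}) collapses $R\sHom_{W_n\O_X}(W_n\O_X(D),\WOn)$ to the underived $\sHom_{W_n\O_X}(W_n\O_X(D),\WOn)$, and then (\ref{iso1}) identifies this with $\sHom_{W\O_X}(W\O_X(D),\WOn)$. Thus the right-hand side becomes $R\lim_n \sHom_{W\O_X}(W\O_X(D),\WOn)$. For the left-hand side, since $\WO=\lim_n\WOn$ and $\sHom$ commutes with limits in its second slot, we get $\sHom_{W\O_X}(W\O_X(D),\WO) \cong \lim_n \sHom_{W\O_X}(W\O_X(D),\WOn)$. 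Writing $\mathcal F_n := \sHom_{W\O_X}(W\O_X(D),\WOn)$, the lemma thus reduces to the vanishing $R^i\lim_n \mathcal F_n = 0$ for $i\geq 1$.

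This vanishing is a local question on $X$ and can be verified on a basis of opens: if for each $U$ in a basis the inverse system $\Gamma_U(\mathcal F_n)$ of abelian groups satisfies Mittag--Leffler, then the sheafified higher derived limits vanish. The hypothesis supplies exactly such a basis $\mathscr B$---except that the transition maps there are surjective in sections for $W\O_X(p^tD)$ with $t\geq t_0$ rather than for $W\O_X(D)$ itself. The delicate part, and what I expect to be the main obstacle, is bridging this shift: one must transfer the surjectivity from the $p^tD$-system to a Mittag--Leffler condition on the $D$-system. I would attempt this via a local Frobenius-style identification (for $t$ large enough $p^tD$ becomes $\mathbb Z$-Cartier, allowing a more explicit comparison between $W\O_X(D)$ and $W\O_X(p^tD)$ on sufficiently small opens) and push the surjectivity of transition maps through that comparison. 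Once Mittag--Leffler is in hand for the $D$-system on $\mathscr B$, the derived limit collapses to the ordinary limit and thereby matches the left-hand side, completing the proof.
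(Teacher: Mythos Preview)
Your reduction is correct and matches the paper: using (\ref{iso1}) and (\ref{iso2}) to replace the right-hand side by $R\lim_n\mathcal F_n$ with $\mathcal F_n=\sHom_{W\O_X}(W\O_X(D),\WOn)$, identifying the left-hand side with $\lim_n\mathcal F_n$, and then reducing to a Mittag--Leffler condition for $\Gamma_U(\mathcal F_n)$ on a basis. You also correctly locate the crux, namely passing from the hypothesis on $p^tD$ to ML for $D$.

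The gap is in how you propose to make that passage. The parenthetical claim that ``for $t$ large enough $p^tD$ becomes $\mathbb Z$--Cartier'' is false: if the denominator of $D$ has a prime factor different from $p$, then $p^tD$ is never integral. More importantly, there is no local isomorphism between $W\O_X(D)$ and $W\O_X(p^tD)$ through which one could simply push the surjectivity; the Frobenius $F$ and Verschiebung $V$ go in opposite directions and neither is invertible. So the proposed ``Frobenius-style identification'' does not exist, and without a concrete replacement the argument does not close.

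The paper's bridge is different and is the real content of the lemma. One uses the short exact sequence
\[
0\to F_*W_{j-1}\O_X(pD)\xrightarrow{V} W_j\O_X(D)\to \O_X(D)\to 0,
\]
which on an affine $U$ (via coherence and $H^1(U,\mathcal F_1)=0$) yields a \emph{surjection} $V^*:\Gamma_U\mathcal \sHom(W_j\O_X(D),\WOn)\twoheadrightarrow \Gamma_U\sHom(W_{j-1}\O_X(pD),\WOn)$. Iterating $V^*$ shifts $D\mapsto pD\mapsto p^2D\mapsto\cdots$ while lowering the Witt level, always surjectively. One then observes that the transition maps of the inverse system are $p^*=F^*\circ V^*$ (since $p=VF=FV$), and assembles a commutative staircase in which the composite $\psi_{k,n}$ factors through the stage $\H_{k+1}(p^{t_0}D)$, where the hypothesis gives surjective transition maps. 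This forces the image of $\psi_{k,n}$ in $\H_k(D)$ to stabilize once $n\ge k+t_0+1$, which is exactly ML. The missing idea in your sketch is this $V^*$-surjectivity coming from the $V$-filtration SES and the factorization of the transition maps as $F^*V^*$; without it there is no way to link the $D$-system to the $p^tD$-system.
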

    \begin{proof}
        The statement follows from
        \[
            R^i\lim_nR\sHom(W_n\O_X(D), W_n\Omega_X^N) = 0 \fa 1\leq i.
        \]

        Let $E_n := \sHom_{W\O_X}(W\O_X(D), \WOn)$.
        By \cite{CR}*{Lemma 1.5.1} we need to show that for $\mathscr B$ a basis for the topology of $X$, $U \in \mathscr B$, the projective system $((H^0(U, E_n))_{1\leq n}, \psi_{ij})$ satisfies the Mittag--Leffler condition (ML), i.e. that the projective system is {\it eventually stable}.
        That is, for any $k \in \mathbb N$, $\exists N_k \in \mathbb N$ such that $\psi_{kn}(H^0(U,E_n)) = \psi_{kl}(H^0(U,E_l))$, for $N_k \leq n,l$.
        (Condition (1) of the above Lemma is satisfied by coherence.)

        For any $x \in X$ fix an affine open subset $U_x \subset X$, and for ease of notation write $\H_j(D) := \Gamma_U\left(\sHom(W_j\O(D),\WOn)\right)$.
        Note that, thanks to \ref{iso1} and \ref{iso2}, for $j \leq n$ we actually know that
        \[
             \H_j(D) \cong \Gamma_U\left(\sHom(W_j\O(D),W_j\Omega_X^N)\right)
             \cong H^0(U,E_j).
        \]

        Suppose now that $t_0 \leq t$.
        Then by assumption
        \[
            \H_j(p^tD) \xrightarrow {\psi_{ij}} \H_i(p^tD)
        \]
        is surjective for $i\leq j$.
        For $D$ we have an exact sequence
        \[
            0 \rightarrow F_*W_{j-1}\O(pD) \xrightarrow V W_j\O(D) \rightarrow \O(D) \rightarrow 0.
        \]
        Because $H^1(U,\sHom(\O_X(D), \WOn)) \cong H^1(U,E_1) = 0$ due to coherence, this sequence yields a surjective map 
        \[
            \H_j(D) \xrightarrow {V^*} \H_{j-1}(pD).
        \]
        Fix $k$ and set $n:= k+t+1$.
        Since $FV=VF=p$, the below diagram is commutative.

        \begin{equation}\label{diagIndPi}
            \begin{tikzcd}
                & & & \H_n(D) \arrow[two heads]{d}{V^*} \\
                & & \H_{n-1}(D) \arrow[two heads]{d}{V^*}  
                    & \H_{n-1}(pD) \arrow{l}{F^*} \arrow[two heads]{d}{V^*} \\
                & & \vdots \arrow[two heads]{d}{V^*} & \vdots \arrow[two heads]{d}{V^*} \\
                & \dots & \H_{k+1}(p^{t-1}D) \arrow{l}{F^*} \arrow[two heads]{d}{V^*}  
                    & \H_{k+1}(p^{t}D) \arrow{l}{F^*} \arrow[two heads]{dl}{p^*} 
                    \\
                \H_k(D) & \dots \arrow{l}{F^*} & \H_k(p^{t}D) \arrow{l}{F^*} 
            \end{tikzcd}
        \end{equation}

        The diagonal maps $ F^* \circ V^* $ are exactly the transition maps $\psi_{ij}$ of the projective system 
        $(H^0(U,E_n))_{1\leq n}$.
        We see this once we compose $R$ with the isomorphisms \ref{iso1} and \ref{iso2} appropriately.
        The $\psi_{ij}$ are given via the chain of isomorphisms
        \[
        \begin{aligned}
            \sHom(W_{n+1}\O_X(D), W_{n+1}\Omega_X^N) 
            & \xrightarrow {R_{n+1}^*} \sHom(W\O_X(D), W_{n+1}\Omega_X^N) \\
            & \xrightarrow {R\circ} \sHom(W\O_X(D), W_n\Omega_X^N) \\
            & \xrightarrow {(R_n^*)^{-1}} \sHom(W_n\O_X(D), W_n\Omega_X^N) \\
            & \xrightarrow {p\circ} \sHom(W_{n}\O_X(D), W_{n+1}\Omega_X^N).
        \end{aligned}
        \]
        So $\psi_{nn+1}$ sends $f$ to 
        \[
        \begin{aligned}
            g &= p\circ R_n^{n+1} \circ f \circ (R^{n+1}_n)^{-1} \\
            &= p \cdot f \circ (R^{n+1}_n)^{-1} \\
            &= p \cdot (f \circ (R^{n+1}_n)^{-1}) \\
            &= f \circ p
        \end{aligned}
        \]
        That is $\psi_{nn+1} = p^*$.
        Here we differentiate between the notations $p$ and $p\cdot$, 
        which stand for either of the maps $W_n\O_X(D) \xrightarrow p W_{n+1}\O_X(D)$, $W_n\Omega_X^N \xrightarrow p W_{n+1}\Omega_X^N$, 
        or the module operation of multiplication by $p$, respectively.

        Set $N_k := n-1$. Then for $N_k \leq n,l$, $\psi_{kn}(H^0(U,E_k)) = \psi_{kl}(H^0(U,E_l))$, i.e. $\psi$ is eventually stable, 
        hence ML is satisfied and the result follows. 
    \end{proof}

    \begin{proposition}\label{prop1}
        Let $D$ be a $\mathbb Q$--divisor with simple normal crossing support, such that $\ell D$ is integral
        for some $\ell \in \mathbb Z \backslash p\mathbb Z$.
        Let $\mathscr B$ be an affine basis for the topology of $X$, and $U \in \mathscr B$.
        Then the map
        \[
            \Gamma_U(\sHom(W\O_X(D),W_{n+1}\Omega_X^N)) \rightarrow \Gamma_U(\sHom(W\O_X(D),W_{n}\Omega_X^N))
        \]
        induced by $W_{n+1}\Omega_X^N \xrightarrow R \WOn$ is surjective.

        Therefore, by Lemma \ref{lem1}, for any simple normal crossing $\mathbb Q$--divisor $D$,
        \[
            \sHom_{W\O_X}(W\O_X(D),\WO) \cong R\lim_nR\sHom_{W_n\O_X}(W_n\O_X(D),\WOn).
        \] 
    \end{proposition}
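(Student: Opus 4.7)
My plan is to prove the surjectivity of the restriction map on sections by exploiting the SNC structure of $D$ to work locally, and then to conclude via Lemma \ref{lem1}. Using the isomorphisms \ref{iso1} and \ref{iso2}, I first recast the map under consideration as the natural map
\[
    \sHom_{W_{n+1}\O_X}(W_{n+1}\O_X(D), W_{n+1}\Omega_X^N) \longrightarrow \sHom_{W_n\O_X}(W_n\O_X(D), W_n\Omega_X^N),
\]
induced by $R$ on the target together with restriction of scalars on the source. The advantage of passing to truncations is that both sides are now coherent modules on the noetherian scheme $(X, W_n\O_X)$, so their higher cohomology vanishes on affine opens.

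Next I would reduce to a local computation. The SNC hypothesis guarantees that $X$ admits an affine basis $\mathscr B$ of charts $U$ on which there are \'etale coordinates $x_1,\ldots,x_N$ with $D|_U = \sum_{i=1}^r a_i\{x_i=0\}$, $a_i \in \frac{1}{\ell}\mathbb Z$. On such a chart, both $W_n\O_X(D)$ and $W_n\Omega_X^N$ admit explicit presentations in terms of Witt vectors of $\O(U)$ and the standard $V/F$--basis of the top de Rham--Witt forms, so that lifting a given homomorphism becomes a concrete calculation. I would further shrink $U$ if necessary so that $\ell D|_U$ is principal.

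For the lifting itself, I would first handle the $\mathbb Z$-Cartier case: there $W_n\O_X(D)$ is invertible and $\sHom(W_n\O_X(D), W_n\Omega_X^N) \cong W_n\Omega_X^N \otimes W_n\O_X(-D)$, after which surjectivity of sections over $U$ follows from surjectivity of $R\colon W_{n+1}\Omega_X^N \to W_n\Omega_X^N$ together with vanishing of $H^1$ of its coherent kernel on the affine $U$. The general $\mathbb Q$-Cartier case I would deduce from this using that $\gcd(\ell, p)=1$ makes $\ell$ a unit in every $W_n$, so that one can rescale by powers of $\ell$ to relate the twist by $D$ to the twist by the integer divisor $\ell D$ without introducing denominators in the Witt ring. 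The main obstacle I anticipate is tracking how the Verschiebung/Frobenius structure of $W_n\O_X(D)$ interacts with the $\lfloor\cdot\rfloor$-rounding implicit in its definition when $D$ is not Cartier; without the coprimality of $\ell$ and $p$, the restriction $R$ would fail to be compatible with this rounding and the desired surjectivity would break down, which is precisely the input that distinguishes the $\mathbb Q$-Cartier statement from the $\mathbb Z$-Cartier one.
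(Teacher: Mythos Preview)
Your treatment of the $\mathbb Z$--Cartier case is correct and essentially coincides with the paper's: apply $\sHom(W_{n+1}\O_X(D),-)$ to the short exact sequence $0\to\mathrm{gr}^n\WO\to\WOno\to\WOn\to 0$, use local freeness of $W_{n+1}\O_X(D)$ to keep exactness, and then take sections over an affine $U$, where coherence kills the relevant $H^1$.

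The $\mathbb Q$--Cartier step, however, has a genuine gap. The assertion that one can ``rescale by powers of $\ell$'' to pass between $W_n\O_X(D)$ and $W_n\O_X(\ell D)$ is not correct: by definition $W_n\O_X(D)$ is assembled from the sheaves $\O_X(\lfloor p^i D\rfloor)$, and the floors $\lfloor p^i D\rfloor$ are not obtained from the integers $p^i\ell D$ by any operation compatible with multiplication by a unit of $W_n$. Concretely, $W_n\O_X(D)$ is in general \emph{not} locally free over $W_n\O_X$ when $D$ is only $\mathbb Q$--Cartier, so neither the tensor description $\sHom\cong\WOn\otimes W_n\O_X(-D)$ nor the vanishing of $\sExt^1(W_{n+1}\O_X(D),\mathrm{gr}^n\WO)$ is available. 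You flag this obstacle yourself, but the proposed fix (invertibility of $\ell$) does not touch it: having $\ell\in W_n^\times$ lets you divide scalars, not extract an $\ell$--th root of a local equation for $\ell D$.

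The paper bypasses this by a completely different route. Using \cite{Tanaka}*{Prop.~4.14} it takes a finite surjective $k$--morphism $h\colon X''\to X$ on which $h^*D$ becomes a $\mathbb Z$--divisor, so the integral case applies on $X''$. Coherent duality together with \cite{Ekedahl}*{Thm.~4.1} identifies $h_*\sHom_{W_n\O_{X''}}(-,W_n\Omega_{X''}^N)$ with $\sHom_{W_n\O_X}(h_*(-),\WOn)$; since $h$ is affine the surjectivity survives the pushforward. Finally the (trace--)split inclusion $W_n\O_X(D)\hookrightarrow h_*W_n\O_{X''}(h^*D)$, as in Step~1 of \cite{Tanaka}*{Thm.~4.15}, dualizes to a split surjection on $\sHom$'s, and a commutative square then forces the desired surjectivity on $X$. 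This finite--cover--plus--trace argument is the key idea missing from your proposal.
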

    \begin{proof}
        First suppose that $\ell = 1$.
        Apply $\sHom(W_{n+1}\O_X(D), \cdot )$ to the short exact sequence of $W_{n+1}\O_X$--modules (cf. \cite{Illusie})
        \[
            0 \rightarrow \text{gr}^n\WO \rightarrow W_{n+1}\Omega_X^N \rightarrow \WOn \rightarrow 0,
        \]
        where $\text{gr}^n\WO$ is coherent (cf. \cite{Tanaka}*{2.6}).
        Since $W_{n+1}\O_X(D)$ is locally free, the result is another short exact sequence
        \[
        \begin{aligned}
            0 &\rightarrow \sHom_{W_{n+1}\O_X}(W_{n+1}\O_X(D), \text{gr}^n\WO) \\
            &\rightarrow \sHom_{W_{n+1}\O_X}(W_{n+1}\O_X(D), W_{n+1}\Omega_X^N) \\
            &\rightarrow \sHom_{W_{n}\O_X}(W_{n}\O_X(D), \WOn) \rightarrow 0
        \end{aligned}
        \]
        Applying $\Gamma_U$ to that sequence yields yet another short exact sequence (thanks to coherence), and therefore the desired surjection.

        Now assume $1 \leq \ell$.
        Let $\H_n(D)$ be as in \ref{lem1}.
        We use the proof of \cite{Tanaka}*{Theorem 4.15} to show surjectivity of 
        \[
            \H_n(D) \xrightarrow{p^*} \H_{n-1}(D)
        \] 
        for $\mathbb Z_{(p)}$--divisors, 
        that is $\mathbb Q$--divisors $D$ such that $\ell D$ is a $\mathbb Z$--divisor for $\ell \in \mathbb Z \backslash p \mathbb Z$.
        Then we use Lemma \ref{lem1} to reduce the case of arbitrary $\mathbb Q$--divisors to that of $\mathbb Z_{(p)}$--divisors.

        First assume that $D$ is a $\mathbb Z_{(p)}$--divisor. 
        According to \cite{Tanaka}*{Proposition 4.14} there exists a finite surjective $k$--morphism $X'' \xrightarrow h X$,
        such that $h^*D$ is a $\mathbb Z$--divisor.
        Analogue to step 1 of \cite{Tanaka}*{Theorem 4.15}, there is a split surjection 
        \[
            \sHom_{W_n\O_{X}}(h_* W_n\O_{X''}(D_{X''}), W_n\Omega_{X}^N) 
            \rightarrow \sHom_{W_n\O_{X}}(W_n\O_{X}(D), W_n\Omega_X^N),
        \]
        of $W_n\O_X$--modules (push forward the split surjection on $X'$ along $f$).
        Because $D_{X''}$ is a $\mathbb Z$--divisor,
        \[
            \sHom_{W\O_{X''}}(W\O_{X''}(D_{X''}),W_n\Omega_{X''}^N) \xrightarrow{R''} \sHom_{W\O_{X''}}(W\O_{X''}(D_{X''}), W_{n-1}\Omega_{X''}^N)
        \]
        is surjective as well.
        Pushing forward along $h$, applying coherent duality, \cite{Ekedahl}*{Theorem 4.1}, 
        and the fact that $h$ is affine and therefore $R^ih_*\mathscr F = 0$ for $0<i$ and quasi--coherent $\mathscr F$, 
        we obtain another surjection
        \[
            \sHom(h_*W_n\O_{X''}(D_{X''}),\WOn) \xrightarrow{h_*R''} \sHom(h_*W_{n-1}\O_{X''}(D_{X''}), W_{n-1}\Omega_X^N).
        \]
        This yields the following commutative diagram, where $h_*R''$ is still surjective due to coherence.
        \begin{equation*}
            \begin{tikzcd}
                \Gamma_U(\sHom(h_*W_n\O_{X''}(D_{X''}), W_n\Omega_{X}^N)) \arrow[two heads]{d}{h_*R''} \arrow[two heads]{r}{} 
                & \Gamma_U(\sHom(W\O_{X}(D), W_n\Omega_{X}^N)) \arrow{d}{R} \\
                \Gamma_U(\sHom(h_*W_{n-1}\O_{X''}(D_{X''}), W_{n-1}\Omega_{X}^N)) \arrow[two heads]{r}{} 
                & \Gamma_U(\sHom(W\O_{X}(D), W_{n-1}\Omega_{X}^N)),
            \end{tikzcd}
        \end{equation*}
        Because the diagram is commutative, $R$ is surjective. 

        The second statement now follows from $\ref{lem1}$.
    \end{proof}

    \begin{theorem}\label{thm}
        Suppose $D$ is a $\mathbb Q$--divisor with SNC support. Then
        \[
            R\phi_*\sHom_{W\O_X}(\omega(D), \WO)
            \cong R\Hom_\omega(R\phi_*\omega(D), \check\omega[-N]).
        \]
        Here we denote 
        \[
            \begin{aligned}
                &\omega(D) := \bigoplus_{t \in \mathbb Z} W\O_X(p^tD),
                &\check\omega := \lim_n \check\omega_n
            \end{aligned}
        \]
        where, as left--$\omega$--modules,
        \[
            \check\omega_n \cong \bigoplus_{0<i<n} F^iW_{n-i}\oplus \prod_{0\leq j}F^{-j}W_n
        \]
        (cf. \cite{Lemcke}*{Proposition 3.9}).
    \end{theorem}
    \begin{proof}
        Choose $t_0$ such tat $p^{t_0}D$ is a $\mathbb Z_{(p)}$--divisor. 
        Then by Proposition \ref{prop1}, Lemma \ref{lem1} applies to $p^{t_0}D$, and hence also to $D$.
        The statement then immediately follows from \cite{Lemcke}*{Theorem 3.10}, Lemma \ref{lem1} and Proposition \ref{prop1}.
    \end{proof}
    
    \begin{corollary}\label{cor}
        Let $X$ and $D$ be as in Theorem \ref{thm}.
        Suppose that $R^j\Gamma_X(W\O_X(p^tD))$ is $V$--torsion  free for $j\leq N$ and large enough $t$.
        Then
        \[
            R\Hom_\omega(R\phi_*\omega(D), \check\omega[-N])
            \cong 
            \Hom_\omega(R\phi_*\omega(D), \check\omega[-N]).
        \]
        If furthermore $R^j\phi_*\omega(D)\otimes_\mathbb Z\mathbb Q = 0$ for $j<N$, then 
        \[
            R^i\phi_*(\sHom_{W\O_X}(W\O_X(D), \WO)) = 0
        \]
        for any $0<i$.
        In particular, by Theorem \ref{thmTanaka}(i) this is the case when $-D$ is ample.
    \end{corollary}
    \begin{proof}
        This proof is contained in \cite{Lemcke}*{Corollary 3.12}. 
    \end{proof}
    
    \begin{remark}\label{rmk}
        Corollary \ref{cor} gives some additional insight into the composition of $R\phi_*\sHom(W\O_X(D),\WO)$.
        Of particular interest is the distinction between $V$--torsion and $p$--torsion.
        The proofs of vanishing \ref{thmTanaka}(ii) for ample divisors $A$ in \cite{Tanaka} rely on Serre vanishing, which guarantees that the cohomology groups 
        $H^{j<N}(X, W\O_X(-p^tA))$ vanish for large enough $t$.
        Corollary \ref{cor} however does not require the cohomology of $-p^tA$ to vanish for any $t$ --- only to be torsion 
        and to be $V$--torsion free for large enough $t$. 
        There are examples which show that this distinction may be relevant.
        For example, for a supersingular abelian surface $X$, $H^2(X, W\O_X)\cong k[[V]]$ (cf. \cite{Illusie}*{7.1 (b)}) is $V$--torsion free $p$--torsion.

        In fact, by setting $D=0$, Corollary \ref{cor} allows us to recover the $W\Omega_X^2$--column of the table in \cite{Illusie}*{7.1 (b)} from the $W\O_X$--column.
    \end{remark}

\section{Vanishing on Surfaces}\label{sec3}
    Throughout this section we assume $X$ to be a smooth surface over a perfect field $k$ of positive characteristic $p$, and $D$ a nef and big $\mathbb Q$--divisor with SNC support on $X$.

    \begin{lemma}\label{lemSurfaceNaBSerre}
        There exists $n_0 \in \mathbb N$ such that
        \[
            H^1(X,\O_X(-p^nD)) = 0
        \]
        for all $n_0 \leq n$.
    \end{lemma}
    \begin{proof}
        Let $t_1$ be large enough such that $p^{t_1}D$ is a $\mathbb Z_{(p)}$--divisor, i.e. such that there exists $\ell \in \mathbb Z \backslash p \mathbb Z$ such that $\ell p^{t_1}D$ is a $\mathbb Z$--divisor.
        Write $D_1 := p^{t_1}D$ for brevity.
        Then by \cite{Tanaka}*{Thm. 4.14}, there exists a finite surjective morphism of smooth surfaces
        \[
            X'' \xrightarrow{h} X
        \]
        such that $h^*D_1$ is a (nef and big) $\mathbb Z$--divisor.
        By \cite{LM}*{Proposition 2}, there exists $t_0 \in \mathbb N$ such that 
        \[
            \begin{aligned}
                H^1(X'',\O_{X''}(-p^th^*D_1)) &= 0
                \fa t_0 \leq t, \text{ or equivalently} \\
                H^1(X'',\O_{X''}(-p^th^*D)) &= 0 
                \fa t_0 + t_1 \leq t.
            \end{aligned}
        \]

        By \cite{Tanaka}*{Thm. 4.15, step 1}, the natural map
        \[
            \O_X(-p^tD) \xrightarrow{\alpha} h_*\O_{X''}(-p^th^*D)
        \]
        splits.
        (Cf. for example \cite{Tanaka}*{Prop. 4.7 (1); Lem. 4.4 (1)}, or just tensor by $\O_X$, to see that Tanaka's statement applies to the truncated case as well --- in particular to $n=1$).
        Since $h$ is finite, any $R^ih_*$ are zero for $0<i$ and we have an injection
        \[
            H^1(X, \O_X(-p^tD)) \xrightarrow{\alpha}
            H^1(X'', \O_{X''}(-p^th^*D)).
        \]
        Hence
        \[
            H^1(X, \O_X(-p^tD)) = 0 \fa t_0+t_1 \leq t.
        \]
    \end{proof}
    \begin{remark}
        It was pointed out to the author by Shunsuke Takagi that the above Lemma also follows from \cite{TanakaX}*{Theorem 2.6}. 
        This is done by choosing $e,l \in \mathbb N$ such that $p^e(p^l-1)D$ is a $\mathbb Z$--divisor, and then setting
        \[
            B_i := p^{e+i}D, N_i := p^{e+i}(p^l-1)D
        \]
        for each $0 \leq i \leq l-1$.
        Applying \cite{TanakaX}*{Theorem 2.6} to $B_i$ and $N_i$ yields $r_i \in \mathbb N$ such that for for all $r_i \leq t$,
        \[
            H^1\left(X,K_X+\lceil B_i\rceil + (p^{l(t-1)}+\cdots +p^l+1)N_i\right) = 0.
        \]
        Since 
        \[
            \lceil B_i\rceil + (p^{l(t-1)}+\cdots +p^l+1)N_i = \lceil p^{e+lt+i}D\rceil,
        \]
        it follows that after choosing $r := \max_i\{r_i\}$,
        \[
            H^1(X,\O(-p^nD)) \cong H^1(X,K_X+\lceil p^nD \rceil) = 0 \fa e+lr \leq n.
        \]
    \end{remark}

    \begin{lemma}\label{lemSurfaceNaB}
        There exists $n_0 \in \mathbb N$ such that 
        \[
            H^1(X,W\O_X(-D)) \cong H^1(X,W_n\O(-D)) 
        \]
        for all $n_0 \leq n$.
        In particular, $H^1(X,W\O_X(-D))_\mathbb Q = 0$.
    \end{lemma}
    \begin{proof}
        Let $n_0$ as in Lemma \ref{lemSurfaceNaBSerre}, and $n_0 \leq t$, and consider the following part of the long exact cohomology sequence:
        \[
            0=H^1(X, W\O_X(-p^{t}D)) \xrightarrow {V^{t}} H^1(X,W\O_X(-D)) \rightarrow H^1(X,W_{t}\O_X(-D))
        \]
        We observe that
        \[
            h^1(X,W\O_X(-D)) \leq h^1(X,W_t\O_X(-D)).
        \]
        Similarly,
        \[
            0 = H^1(X,W_{t-t_0}\O_X(-p^{t_0}D)) 
            \xrightarrow{V^{t_0}} H^1(X,W_t\O_X(-D))
            \rightarrow H^1(X,W_{t_0}\O_X(-D))
        \]
        is exact.
        So we have
        \[
            0 \leq h^1(X,W\O_X(-D)) \leq h^1(X,W_t\O_X(-D)) \leq h^1(X,W_{t_0}\O(-D))
        \]
        for all $t_0 \leq t$,
        and since the latter is finite there exists a $t_1$ for which $h^1(X,W_t\O_X(-D))$ is smallest.
        Then
        \[
            h^1(X,W\O_X(-D)) = h^1(X,W_{t_1}\O(-D)).
        \]
    \end{proof}

    \begin{proposition}\label{propSurfaceNaBVtor}
        For $0<<n$, 
        \[
            \text{ker}\left(H^2(X,W\O_X(-p^{n}D)) \xrightarrow {V^s} H^2(X,W\O_X(-p^{n-s}D))\right) =0
        \] 
        for all $0<s\leq n$.
    \end{proposition}
    \begin{proof}
        Let $n_0$ be the integer from the above lemma 
        and $n_0 < n$.
        We have the following exact sequence
        \begin{align*}
            0 &\rightarrow H^1(X,W\O_X(-D)) \xrightarrow{\sim} H^1(X,W_{n}\O_X(-D)) \\
            &\xrightarrow 0 H^2(X,W\O_X(-p^{n}D)) \xrightarrow{V^n} H^2(X,W\O_X(-D)).
        \end{align*}
    \end{proof}

    From Theorem \ref{thm}, Lemma \ref{lemSurfaceNaB}, Proposition \ref{propSurfaceNaBVtor}, it immediately 
    follows the following Kawamata-Viehweg type vanishing theorem for Witt divisorial sheaves of 
    nef and big divisors.

    \begin{theorem}\label{thmKVV}
        For $X$ and $D$ as above,
        \renewcommand{\theenumi}{\roman{enumi}}
        \begin{enumerate}
            \item $H^1(X,W\O_X(-D))_\mathbb Q=0$,
            \item $H^1(X,\sHom_{W\O_X}(W\O_X(-D),\WO)) = 0$.
        \end{enumerate}
    \end{theorem}

    \section{Counter example}\label{sec4}
    There are examples of surfaces for which Ramanujam vanishing, i.e. the vanishing of $H^1(X,L^{-1})$ for nef and big $L$, fails (cf. \cite{Mukai}).
    Yet for Witt divisorial sheaves, the vanishing from section \ref{sec3} holds.
    Similarly, there are counterexamples of a logarithmic Kawamata--Viehweg type vanishing for surfaces (cf. \cite{CT}).
    It is then a natural question whether such a logarithmic Kawamata--Viehweg type vanishing holds for Witt divisorial sheaves.
    Below we compute an example to see that the answer is no.

    In \cite{CT} Cascini and Tanaka show that a number of rational surfaces, originally constructed by Langer \cite{Langer}, violate logarithmic Kawamata-Viehweg vanishing. In particular this includes a weak Del Pezzo surface in characteristic 2. 

    Let $q = p^e$ and $k$ be a field containing $\mathbb F_q$.
    Cascini and Tanaka \cite{CT}*{Notation 2.1} define the surface $X$ to be the base change to $k$ of the blowup of $\mathbb P_{\mathbb F_p}^2$ at its $q^2+q+1$ $\mathbb F_q$--rational points.
    $B$ and $\Delta$ are then defined as follows: 
    \begin{itemize}
        \item $B := (q^2 + 1)f^*H - q\Sigma_{i=1}^{q^2+q+1} E_i,$
        \item $\Delta := \frac{q}{q+1}\Sigma_{i=1}^{q^2+q+1} L_i',$
    \end{itemize}
    where $H$ is the hyperplane divisor on $\mathbb P_k^2$, and the $L_i'$ are the proper transforms of the $\mathbb F_q$--lines on $\mathbb P^2_{\mathbb F_q}$, pulled back to $X$ along the base change map.
    Then (cf. \cite{CT}*{Theorem 3.1})
    \begin{itemize}
        \item $E_i^2 = -1,$
        \item $(L_i')^2 = -q$,
        \item $(X,\Delta)$ is klt,
        \item $B-\Delta$ and $-K_X$ are nef and big,
        \item $h^1(X,\O_X(-B)) \geq \frac{1}{2}(q^2+q)$.
    \end{itemize}
    We shall attempt to calculate $H^1(X,W\O_X(-B))$ in the case of $q=p=2$ by calculating the cohomology of the truncated sheaves $W_n\O_X(-B)$ for all $n$ and their transition maps to obtain the limit.

    \begin{theorem}\label{example}
        Let $q=p=2$. Then 
        \[
            H^1(X,W\O_X(-B))_\mathbb Q \neq 0.
        \]
    \end{theorem}
    \begin{proof}
    Push forward along $f$ the following exact sequence:
    \begin{equation}\label{exactSequenceVerschiebung}
        0 \rightarrow F_*\O_X(-p^{n-1}B) \xrightarrow {V^{n-1}} W_n\O_X(-B) \xrightarrow R \O_X(-B) \rightarrow 0.
    \end{equation}
        Since $R^2f_*\O(-p^tB)=0$, for each term we obtain a Leray spectral sequence which degenerates at page three. 
        Thanks to functoriality then we have a three term complex of five term exact sequences. 
        For each term, the five term exact sequence is
    \begin{equation}\label{exactSequenceLeray}
    \begin{aligned}
        0 &\rightarrow H^1(\mathbb P^2,f_*W_k\O_X(-p^lB)) \rightarrow H^1(X,W_k\O_X(-p^lB)) \\
        &\rightarrow H^0(\mathbb P^2,R^1f_*W_k\O_X(-p^lB)) \rightarrow H^2(\mathbb P^2,f_*W_k\O_X(-p^lB)) \\
        &\rightarrow H^2(X,W_k\O_X(-p^lB)).
    \end{aligned}
    \end{equation}
    \begin{remark}
        It follows from the seven term exact sequence for spectral sequences that the final arrow is in fact a surjection. 
        However, we don't use this fact in the rest of the proof.
    \end{remark}
    We calculate the first, third, fourth and fifth terms to obtain the second.
    To calculate the limit we then need to determine the transition maps
    \[
        H^1(X,W_k\O_X(-p^lB)) \xrightarrow {R_n} H^1(X,W_{k-1}\O_X(-p^lB))
    \]
    \proofstep{First term}
    We have $f_*W_n\O_X(-B) = W_n\O_{\mathbb P^2}(-5)$, and hence 
    \[
        h^1(\mathbb P^2,f_*W_n\O_X(-p^tB)) = 0 \fa 0 \leq t.
    \]

    \proofstep{Third term}
    Since $R^1f_*\O_X(-B)$ is supported on the seven $k$--points and the $E_i$ have multiplicity 2 in $-B$, it is a skyscraper sheaf with global sections $k^7$ and zero higher cohomology.
    Similarly, $H^0(\mathbb P^2, R^1f_*\O_X(-p^nB)) = k^{7t_{2p^n}}$ where $t_l = \frac{l(l-1)}{2}$.
    We chase the derived push forward along $f$ of the exact sequence
    \[
        0 \rightarrow \O_X((n-1)E_i) \rightarrow \O_X(nE_i) \rightarrow \O_{E_i}(-n) \rightarrow 0
    \]
    to see this.
    Pushing forward along $f$ we obtain
    \[
        0 \rightarrow R^1f_*\O_X((n-1)E_i) \rightarrow R^1f_*\O_X(nE_i) \rightarrow R^1f_*\O_{E_i}(-n) \rightarrow 0.
    \]
    Away from its corresponding $\mathbb F_q$--rational point $P_i$, $R^1f_*\O_{E_i}(-n)$ is zero.
    On any open subset containing $P_i$, its sections are isomorphic to $H^0(\mathbb P^1,\O(-n))$, which is of dimension $n-1$ for $0<n$.
    Therefore $R^1f_*\O_X(nE_i)$ is a skyscraper sheaf supported on $P_i$, with 
    \[
        h^0(\mathbb P^2,R^1f_*\O_X(E_i^n)) 
        = h^0(\mathbb P^2,R^1f_*\O_X(E_i^{n-1})) + n - 1.
    \]
    I.e. its dimension is $\frac{n(n-1)}{2}$.
    There are seven $\mathbb F_q$--rational points $P_i$, and so seven corresponding $E_i$ in $-B$.
    The rest of $-B$ does not contribute because it has zero intersection with $E_i$.
    Hence the statement
    \[
        h^0(\mathbb P^2,R^1f_*\O_X(-p^nB)) = 7t_{2p^n}
    \]
    follows.

    To understand the Witt module structure of the Witt divisorial sheaf cohomology we need to understand how $V$ and $F$ act on $R^1f_*\O_X(-p^nB)$. 
    First, since 
    \[
        f_*W_n\O_X(-p^tB) \xrightarrow{R_n} f_*W_{n-1}\O_X(-p^tB)
    \] is naturally surjective, 
    \[
        R^1f_*\O_X(-p^{t+n-1}B) \xrightarrow{V^{n-1}} R^1f_*W_n\O_X(-p^tB)
    \]
    is injective.

    To understand $F$, take the short exact sequence (cf. \cite{Mukai})
    \begin{equation}\label{exactSequenceFrobenius}
        0 \rightarrow \O_{X} \xrightarrow{F} F_*\O_X \rightarrow \mathscr B_X \rightarrow 0
    \end{equation}
    tensor with $\O_X(-p^nB)$, and push forward along $f$.
    Outside of the seven $k$--points of $\mathbb P^2_k$, $f$ is an ismorphism, and so the restriction of $f_*\mathscr B_X$ to that open subset $U$ is just $(\mathscr B_{\mathbb P^2} \tensor \O_{\mathbb P^2}(-p^n5))|_U$.
    Therefore we just need to determine the stalks on the seven $k$--points.
    Let $E := E_i$ for any $i$, and $E_i \xrightarrow{i} X$ the inclusion.
    Applying $i^*$ to \ref{exactSequenceFrobenius} we obtain
    \[
        \O_E \xrightarrow{F} F_*\O_E \rightarrow (\mathscr B_X)|_E \rightarrow 0.
    \]
    We have $F_*\O_E \cong \O_E \oplus \O_E(-1)$ (cf. \cite{Thomsen}, for example).
    Since $F$ is non-zero on global sections, it follows that $(\mathscr B_X)|_E \cong \O_E(-1)$.

    Because $-B.E=-2$, repeating the entire process after tensoring with $\O_X(-p^nB)$ yields 
    \[
        (\mathscr B_X \tensor \O_X(-p^nB))|_E \cong \O_E(-1-2p^n),
    \]
    which has no global sections.
    Therefore, 
    \[
        R^1f_*\O_X(-p^nB) \xrightarrow{F} R^1f_*\O_X(-p^{n+1}B)
    \]
    is injective. 

    Finally, to compute the limit, we need to determine the transition maps $R_n$.
    Since $R^2f_*\O_X(-p^tB)$ is zero for all $t$, 
    \[
        R^1f_*W_n\O_X(-p^tB) \xrightarrow{R_n} R^1f_*W_{n-1}\O_X(-p^tB)
    \]
    is surjective for all $t$.
    Therefore, $H^0(\mathbb P^2, R^1f_*W_n\O_X(-B))$ is a direct sum of $W^T_t$ for $1\leq t \leq n$, certain $T$ depending on $t$ and $n$, including $(T,t) = (7,n)$.

    \proofstep{Fourth term}
    Using Serre duality we can compute 
    \[
        \begin{aligned}
            h^2(\mathbb P^2,f_*\O_X(-B)) &= h^0(\mathbb P^2,\O(2)) = 6,\\
            h^2(\mathbb P^2,f_*\O_X(-p^tB)) &= h^0(\mathbb P^2,\O(p^t5-3)) = \binom{2+p^t5-3}{p^t5-3} \fa 1 \leq t.
        \end{aligned}
    \]
    Now, $H^2(\mathbb P^2,W_n\O_{\mathbb P^2}(-5))$ sits in the following exact sequence of $W_n$--modules:
    \[
        \begin{aligned}
            \cdots &\rightarrow H^2(\mathbb P^2,\O_{\mathbb P^2}(-p^{n-1}5)) \xrightarrow {V^{n-1}} H^2(\mathbb P^2,W_n\O_{\mathbb P^2}(-5)) \\
            &\xrightarrow {R} H^2(\mathbb P^2,W_{n-1}\O_{\mathbb P^2}(-5)) \rightarrow 0.
        \end{aligned}
    \]
    To compute its structure as a $W_n$--module for $1 < n$ then, we need to know how $V$ and $F$ act on $H^2(\mathbb P^2,\O_{\mathbb P^2}(-p^n5))$ for any $n$.

    Since the $H^1$ are zero, $V$ is injective on $H^2$.
    Since $\mathbb P^2$ is $F$-split (cf. \cite{SZ15}*{Example 2.2}), $F$ is injective on $H^2$.
    It follows that similar to the third term, the fourth term is a direct sum of $W^T_t$, for certain $t$, $T$ depending on $t$ and $n$, including $(T,t)=(6,n)$.

    \proofstep{Fifth term}
    Using Serre duality we can easily compute $H^2(X,\O_X(-B))=0$.
    For $1<n$, the fifth term then depends on the image of the $H^1$ term under $R$, as it is the quotient $Q^{n-1}$ of $H^2(X,\O_X(-p^{n-1}B))$ by the cokernel of $R$, and so we cannot compute it before knowing the behavior of $R$ on the first cohomology.

    \proofstep{Taking the limit}
    Having computed the relevant terms as far as possible, sequence \ref{exactSequenceLeray} becomes a four term exact sequence, and the three term complex of (now four term) exact sequences induced by \ref{exactSequenceVerschiebung} yields the commutative diagram of $W_n$--modules
    \begin{equation*}
        \begin{tikzcd}
            0 \arrow{r} & H^1(X,\O_X(-p^{n-1}B)) \arrow{r} \arrow{d}{V^{n-1}} & A_1^{n-1} \arrow{r} \arrow{d}{V^{n-1}} & B_1^{n-1} \arrow{r} \arrow{d}{V^{n-1}} & Q_1^{n-1} \arrow{d}{V^{n-1}} \\
            0 \arrow{r} & H^1(X,W_n\O_X(-B)) \arrow{r} \arrow{d}{R_n} & A_n^0 \arrow{r}{\phi} \arrow{d}{R_n} & B_n^0 \arrow{r} \arrow{d}{R_n} & Q_n^0 \arrow{d}{R_n}\\
            0 \arrow{r} & H^1(X,W_{n-1}\O_X(-B)) \arrow{r}{\psi} & A_{n-1}^0 \arrow{r} & B_{n-1}^0 \arrow{r} & Q_{n-1}^0.
        \end{tikzcd}
    \end{equation*}
    Here we wrote 
    \[
        \begin{aligned}
            A^l_k &:= R^1f_*W_k\O_X(-p^lB),\\
            B^l_k &:= H^2(\mathbb P^2, W_k\O_X(-p^lB)),\\
            Q^l_k &:= H^2(X, W_k\O_X(-p^lB)).
        \end{aligned}
    \]
    for brevity.
    \begin{remark}
        In the case $n=2$ the bottom row gives another proof that
        \[
            H^1(X,\O_X(-B)) \neq 0.
        \]
        In fact its dimension is equal to one, because $Q_1^0$ is zero.
    \end{remark}
    As shown above, the $A_k^l$ and $B_k^l$ are direct sums of $W^T_k$ with $T$ an integer depending on $k$ and $l$.
    Both $R_n$ and $\phi$ are evaluated on each summand individually.
    Now let $e$ be a non-zero element in $H^1(X,W_{n-1}\O_X(-B))$.
    We already know that the second and third columns are in fact short exact sequences, with $V^{n-1}$ injective and $R_n$ surjective.
    Hence $\psi$ maps $e$ into $A_{n-1}^0$, where it has a non-zero preimage under $R_n$ in $A_n^0$.
    In fact, since $A_1^{n-1}$ is the kernel of $R_n$, $({R_n})^{-1}(\psi(e))$ has dimension 
    \[
        \text{dim}(A_1^{n-1}) = 7\left(\frac{(2p^{n-1}-1)2p^{n-1}}{2}\right).
    \]
    On the other hand, the image of $A_1^{n-1}$ in $B_n^0$ lies in $B_1^{n-1}$, which has dimension
    \[
        \text{dim}(B_1^{n-1}) = \binom{2+p^{n-1}5-3}{p^{n-1}5-3} = \frac{(2+p^{n-1}5-3)!}{2!(p^{n-1}5-3)!}.
    \]
    Comparing the two dimensions, we can see that for $0<n$,
    \[
        \begin{aligned}
            &\text{dim}\left(B_1^{n-1}\right) - \text{dim}\left(A_1^{n-1}\right)\\ 
            &= \frac{\left(2+p^{n-1}5-3\right)!}{2!\left(p^{n-1}5-3\right)!} 
                - 7\left(\frac{\left(2p^{n-1}-1\right)2p^{n-1}}{2}\right) \\
            &= \frac{\left(p^{n-1}5-1\right)!}{2!\left(p^{n-1}5-3\right)!}
                - 7 \left(p^{n-1}\left(2p^{n-1}-1\right)\right) \\
            &= \frac{\left(p^{n-1}5-1\right)\left(p^{n-1}5-2\right)}{2}
                - 7 \left(p^{n-1}\left(2p^{n-1}-1\right)\right) \\
            &= \frac{\left(25\cdot p^{2(n-1)} - 15\cdot p^{n-1} + 2\right)}{2}
                - 7\left(2\cdot p^{2(n-1)} - p^{n-1}\right) \\
            &= \frac{1}{2} \left((25-28)p^{2(n-1)} + \left(14-15\right)p^{n-1} + 2\right) \\
            &= \frac{1}{2} \left(-3\cdot p^{2(n-1)} - p^{n-1} + 2\right) < 0.
        \end{aligned}
    \]
    Evidently, for $0<n$,
    \[
        \text{dim}(B_1^{n-1}) < \text{dim}(A_1^{n-1}).
    \]
    That is, the preimage of $\psi(e)\in A_{n-1}^0$, which lies in $A_n^0$, has non-zero kernel under $\phi$.
    I.e. it, and so $e$, has a non-zero preimage in $H^1(X,W_n\O_X(-B))$. 

    Hence the transition maps $R_n$ on $H^1(X,W_n\O_X(-B))$ are surjective, and 
    \[
        H^1(X,W\O_X(-B))_\mathbb Q \neq 0.
    \]
    In fact it has infinite dimension over $W_\mathbb Q$.
    \end{proof}

\begin{bibdiv}
\begin{biblist}

\bib{Baudin}{article}{
    author = {Baudin, Jefferson},
    title = {A Grauert-Riemenschneider vanishing theorem for Witt Canonical Sheaves},
    journal = {arXiv:2506.14647},
    year = {2025}
}

\bib{CR}{article}{
	author = {Chatzistamatiou, Andre},
	author = {Rülling, Kay},
	year = {2011},
	pages = {},
	title = {Hodge-Witt cohomology and Witt-rational singularities},
	volume = {17},
	journal = {Documenta Mathematica}
}

\bib{CT}{article}{
    author = {Cascini, Paolo},
    author = {Tanaka, Hiromu},
    year = {2018},
    title = {Smooth rational surfaces violating Kawamata--Viehweg vanishing},
    volume = {4},
    journal = {European Journal of Mathematics}
}

\bib{Ekedahl}{article}{
	author={Ekedahl, Torsten},
	title={On the multiplicative properties of the de Rham---Witt complex. I},
	journal={Arkiv f{\"o}r Matematik},
	year={1984},
	volume={22},
	number={2},
	pages={185--239}
}

\bib{Illusie}{article}{
	author = {Illusie, Luc},
	journal = {Annales scientifiques de l'École Normale Supérieure},
	language = {fr},
	number = {4},
	pages = {501-661},
	publisher = {Elsevier},
	title = {Complexe de de Rham-Witt et cohomologie cristalline},
	url = {http://eudml.org/doc/82043},
	volume = {12},
	year = {1979}
}

\bib{Langer}{article}{
    author = {Langer, Adrian},
    year = {2016},
    title = {The Bogomolov-Miyaoka-Yau inequality for logarithmic surfaces in positive characteristic},
    volume = {165(14)},
    journal = {Duke Mathematical Journal}
}

\bib{Lemcke}{article}{
    author = {Lemcke, Niklas},
    title = {Duality for Witt Divisorial Sheaves},
    journal = {Arkiv f{\"o}r Matematik},
    year = {2022},
    volume = {60},
    number = {1},
    pages = {107-124}
}

\bib{LM}{article}{
    author = {Lewin-M\'en\'egaux, Ren\'ee},
    title = {Un théorème d’annulation en caractéristique positive},
    journal = {Ast\'erisque},
    year = {1981},
    volume = {82},
    pages = {35-43}
}

\bib{Mukai}{article}{
    author = {Mukai, Shigeru},
    title = {Counterexamples to Kodaira’s vanishing and Yau’s inequality in positive characteristics},
    journal = {Kyoto Journal of Mathematics},
    year = {2013},
    volume = {52},
    number = {2},
    pages = {515-532},
}

\bib{RR}{article}{
    author = {Ren, Fei},
    author = {Ruelling, Kay},
    title = {On the vanishing of the de Rham--Witt complex},
    journal = {arXiv:2403.18763},
    year = {2024}
}

\bib{SZ15}{article}{
    author = {Smith, Karen E.},
    author = {Zhang, Wenliang},
    year = {2015},
    title = {The Trace Map of Frobenius and Extending Sections for Threefolds},
    volume = {67},
    journal = {MSRI Publications},
    pages = {291-345},
    number = {1}
}

\bib{TanakaX}{article}{
    author = {Tanaka, Hiromu},
    title = {The X-method for klt surfaces in positive characteristic},
    journal = {Journal of Algebraic Geometry},
    year = {2015},
    volume = {24},
    pages = {605-628},
}

\bib{Tanaka}{article}{
    author = {Tanaka, Hiromu},
    title = {Vanishing theorems of Kodaira type for Witt Canonical sheaves},
    journal = {Selecta Mathematica, New Series},
    year = {2022},
    volume = {28},
    number = {12}
}

\bib{Thomsen}{article}{
    author = {Thomsen, Jesper Funch},
    title = {Frobenius direct images of line bundles on toric varieties},
    journal = {Journal of Algebra},
    year = {2000},
    volume = {226},
    pages = {865-874}
}

\end{biblist}
\end{bibdiv}

\end{document}